\newtheorem{theorem}{Theorem}[section]
\newtheorem{lemma}[theorem]{Lemma}
\theoremstyle{definition}
\newtheorem{remark}[theorem]{Remark}
\newcommand{\End}{\text{End}}
\newcommand{\Hom}{\text{Hom}}
\newcommand{\ben}{\begin{enumerate}}
\newcommand{\een}{\end{enumerate}}
\theoremstyle{plain}
\newtheorem*{sol}{Solution}
\theoremstyle{definition}
\theoremstyle{remark}
\newcommand{\solu}[1]{\begin{sol}{\bf (\ref{#1})}}
\def\End{\mathrm{End}}
\def\Hom{\mathrm{Hom}}
\begin{document}

\title{Semisimple and $G$-equivariant simple algebras over operads}

\author{Pavel Etingof}

\begin{abstract}
Let $G$ be a finite group. There is a standard theorem on the classification of $G$-equivariant finite dimensional simple commutative, associative, and Lie algebras
(i.e., simple algebras of these types in the category of representations of $G$). Namely, such an algebra is of the form $A={\rm Fun}_H(G,B)$, where $H$ is a subgroup of $G$, and $B$ is a 
simple algebra of the corresponding type with an $H$-action. We explain that such a result holds in the generality of algebras over a linear operad. 
This allows one to extend Theorem 5.5 of \cite{S} on the classification of simple commutative algebras in the Deligne category 
${\rm Rep}(S_t)$ to algebras over any finitely generated linear operad. 
\end{abstract}

\maketitle 

\section{Semisimple algebras over operads}

\subsection{Algebras} 
Let $C$ be a linear operad over a field $F$ (\cite{LV}). 
E.g., $C$ can be the operad of commutative associative unital algebras, associative unital algebras, or Lie algebras (the latter if $1/2\in F$). 

Recall (\cite{LV}) that a $C$-{\it algebra} is a vector space $A$ over $F$ with a collection of linear maps $\alpha_n: C(n)\to \Hom_F(A^{\otimes n},A)$ compatible with the operadic structure.  
Clearly, a direct sum of finitely many $C$-algebras is a $C$-algebra. 

Given a $C$-algebra $A$, we can define the space $E_A\subset {\rm End}_F(A)$ spanned over $F$ 
by operators of the form $\alpha_n(c)(a_1,...,a_{j-1},?,a_j,...,a_{n-1})$ for various $n\ge 2$, $c\in C(n)$, 
and $a_i\in A$. By the definition of an operad, $E_A$ is a (possibly non-unital) subalgebra of 
${\rm End}_F(A)$. We also denote by $L_A$ the image of $C(1)$ in $\End_F(A)$. Clearly, 
$L_A$ is a unital subalgebra and $L_AE_A=E_AL_A=E_A$. Thus $R_A:=L_A+E_A$ is a unital subalgebra of ${\rm End}_F(A)$, and $E_A$ is an ideal in $R_A$. 

\begin{lemma}\label{dirsum} One has $E_{A_1\oplus...\oplus A_m}=E_{A_1}\oplus...\oplus E_{A_m}$.  
\end{lemma}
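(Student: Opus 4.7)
My plan is to prove the equality by two inclusions, both resting on the componentwise nature of the direct-sum $C$-algebra structure. First I would make explicit the ``clearly'' of the preceding paragraph: the $C$-algebra structure on $A:=A_1\oplus\cdots\oplus A_m$ is characterized by $\alpha_n(c)(x_1,\ldots,x_n)=\alpha_n^{A_k}(c)(x_1,\ldots,x_n)$ when all $x_i$ lie in a single summand $A_k$, and $\alpha_n(c)(x_1,\ldots,x_n)=0$ whenever the inputs are drawn from different summands. Under the decomposition $\End_F(A)=\bigoplus_{i,j}\Hom_F(A_i,A_j)$, each $E_{A_k}$ then embeds into $\End_F(A)$ as operators supported on the $(k,k)$-block, and this is the sense in which the equality of the lemma is to be read.

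For the inclusion $E_A\subseteq\bigoplus_k E_{A_k}$, I would take a spanning element $T=\alpha_n(c)(a_1,\ldots,a_{j-1},?,a_j,\ldots,a_{n-1})$ of $E_A$, decompose each $a_i=\sum_k a_i^{(k)}$ with $a_i^{(k)}\in A_k$, and apply $T$ to $x=\sum_k x^{(k)}$. Multilinearity expands $T(x)$ into a sum indexed by assignments of a summand to each of the $n$ arguments; the componentwise vanishing rule kills every term in which these assignments are not globally constant, leaving
$$T(x)=\sum_{k=1}^{m}\alpha_n(c)\bigl(a_1^{(k)},\ldots,a_{j-1}^{(k)},x^{(k)},a_j^{(k)},\ldots,a_{n-1}^{(k)}\bigr),$$
which is the diagonal action of an element of $\bigoplus_k E_{A_k}$.

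For the reverse inclusion, I would check that each $E_{A_k}$, viewed as the $(k,k)$-block inside $\End_F(A)$, already lies in $E_A$. Indeed, a spanning operator $\alpha_n(c)(a_1,\ldots,?,\ldots,a_{n-1})$ of $E_{A_k}$ with $a_i\in A_k$ can be read verbatim as an element of $E_A$ by using the inclusion $A_k\hookrightarrow A$; the componentwise rule forces it to act as the original operator on $A_k$ and as zero on every other summand, which is exactly the block embedding. Summing over $k$ yields $\bigoplus_k E_{A_k}\subseteq E_A$.

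The only real work is the bookkeeping in the multilinearity expansion, which is routine; the main obstacle is verbal rather than mathematical, namely pinning down the definition of the direct-sum $C$-algebra structure precisely enough that the ``mixed inputs give zero'' step is unambiguous.
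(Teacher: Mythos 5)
Your proof is correct and follows essentially the same route as the paper: the reverse inclusion is exactly the paper's argument (reading a spanning operator of $E_{A_k}$ as one of $E_A$ via the block embedding), and your multilinearity expansion just spells out the forward inclusion that the paper dismisses as clear.
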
 

\begin{proof} It is clear that $E_{A_1\oplus...\oplus A_m}\subset E_{A_1}\oplus...\oplus E_{A_m}$.
Let $a_i\in A_r$, $c\in C(n)$, and $b=\alpha_n(c)(a_1,...,a_{j-1},?,a_j,...,a_{n-1})\in E_{A_r}$. Let 
$b':=(0,...,b,...,0)$ (where $b$ is at the $r$-th place). Then we have \linebreak $b'=\alpha_n(c)(a_1',...,a_{j-1}',?,a_j',...,a_{n-1}')$, where $a_i'=(0,...,a_i,...,0)$. 
Hence $b'\in E_{A_1\oplus...\oplus A_m}$. 
Thus $E_{A_1\oplus...\oplus A_m}\supset E_{A_1}\oplus...\oplus E_{A_m}$.
\end{proof} 

\subsection{Ideals} 
By an {\it ideal} in a $C$-algebra $A$ we mean a subspace $I\subset A$ such that
for any $n\ge 1$, $c\in C(n)$, $j\in [1,n]$, and $T\in A^{\otimes j-1}\otimes I\otimes A^{\otimes n-j}$ 
one has $\alpha_n(c)T\in I$. 

\begin{lemma}\label{submo} (i) $I\subset A$ is an ideal if and only if it is an $R_A$-submodule
of $A$.

(ii) $A=A_1\oplus...\oplus A_m$ as an $R_A$-module if and only if it is so as a $C$-algebra. 
\end{lemma}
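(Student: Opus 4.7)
The plan for part (i) is to unpack the definition of ideal and of $R_A$-submodule side by side and match them. I will split the ideal condition by arity. For $n=1$, the condition reads $\alpha_1(c)(I)\subset I$, which is exactly $L_A\cdot I\subset I$. For $n\ge 2$, specializing to elementary tensors $T=a_1\otimes\cdots\otimes a_{j-1}\otimes x\otimes a_j\otimes\cdots\otimes a_{n-1}$ with $x\in I$, the condition becomes that every generating operator of $E_A$ sends $I$ to $I$, hence $E_A\cdot I\subset I$. Together these give $R_A\cdot I\subset I$. Conversely, assuming $I$ is an $R_A$-submodule, the same two observations show the ideal condition on elementary tensors, and by linearity on all $T\in A^{\otimes j-1}\otimes I\otimes A^{\otimes n-j}$.

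For part (ii), the forward direction follows quickly: a $C$-algebra direct sum means each $A_r$ is a subalgebra such that $\alpha_n(c)$ vanishes on any tuple in which not all entries lie in the same summand. In particular each $A_r$ is an ideal, hence an $R_A$-submodule by (i), and the decomposition is one of $R_A$-modules.

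For the converse, assume $A=A_1\oplus\cdots\oplus A_m$ as an $R_A$-module. Each summand $A_r$ is then an $R_A$-submodule, so by (i) it is an ideal in $A$. The main point to verify is that the operations $\alpha_n(c)$ for $n\ge 2$ actually act componentwise. Writing $a_i=\sum_r a_i^r$ with $a_i^r\in A_r$ and expanding by multilinearity, it suffices to show that $\alpha_n(c)(a_1^{r_1},\dots,a_n^{r_n})=0$ whenever the indices $r_1,\dots,r_n$ are not all equal. Here is the key step: since each $A_{r_i}$ is an ideal, slotting the $i$-th argument into the ideal $A_{r_i}$ while treating the rest as arbitrary elements of $A$ forces the output to lie in $A_{r_i}$. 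Applying this for every $i$, the output lies in $\bigcap_i A_{r_i}$, which is $0$ because the sum $A_1\oplus\cdots\oplus A_m$ is direct. Thus only the ``diagonal'' terms survive, giving the componentwise formula that characterizes the $C$-algebra direct sum structure.

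The proof is not technically difficult; the only place that requires a genuine idea is the converse of (ii), where one must argue that mixed-slot operations vanish. The main obstacle there is recognizing that the ideal property automatically provides ``absorption in every slot simultaneously,'' so that the intersection-of-summands argument can be invoked.
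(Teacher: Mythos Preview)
Your proof is correct and follows essentially the same approach as the paper's: part (i) is unwound from the definitions, and for the ``only if'' direction of (ii) you use (i) to conclude each $A_r$ is an ideal and then invoke the intersection-of-summands argument to kill mixed terms. The paper compresses this last step into the single remark that $\alpha_n(\dots,x,\dots,y,\dots)=0$ whenever $x\in A_i$, $y\in A_j$ with $i\ne j$, which is exactly your argument applied to two slots rather than all of them.
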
 

\begin{proof} (i) This follows directly from the definition. 

(ii) The ``if'' direction is clear. To prove the ``only if'' direction, 
note that by (i) $A_i$ are ideals in $A$, hence $\alpha_n(...,x,...,y,...)=0$ 
once $x\in A_i$ and $y\in A_j$ with $j\ne i$, which implies the statement. 
\end{proof}  

It is clear that if $I\subset A$ is an ideal then $A/I$ is a $C$-algebra, and 
$E_{A/I}, L_{A/I}, R_{A/I}$ are homomorphic images of $E_A, L_A,R_A$ in ${\rm End}_F(A/I)$. 
 
\subsection{Simple and semisimple algebras} 
 From now on we assume that $A$ is a finite dimensional $C$-algebra. We say that $A$ is {\it simple} if any ideal in $A$ is either $0$ or $A$ (i.e., $A$ is a simple $R_A$-module), and 
$E_A\ne 0$.\footnote{Note that this recovers the standard definition for commutative, associative, and Lie algebras. Moreover, while
in the commutative and associative case, the condition $E_A\ne 0$ is automatic for $A\ne 0$ because of the unit axiom, in the Lie case it 
is needed (as an abelian Lie algebra is not simple). Note also that if $C(n)=0$ for $n\ne 1$ (i.e., when $C$ is an ordinary algebra), 
then $E_A=0$ automatically, so there are no simple $C$-algebras, even though there may exist simple $C$-modules.} 

\begin{lemma}\label{simea} If $A$ is a simple $C$-algebra then $E_A=R_A$, and it is a central simple algebra (over some finite field extension of $F$). 
\end{lemma}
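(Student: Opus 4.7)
The plan is to apply the Jacobson density theorem to the simple faithful $R_A$-module $A$, deduce that $R_A$ is a central simple algebra over a finite extension of $F$, and then observe that $E_A$ must equal $R_A$ because it is a nonzero two-sided ideal therein.

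First I would translate the operadic hypothesis into module-theoretic language. By Lemma~\ref{submo}(i), the $C$-ideals of $A$ coincide with the $R_A$-submodules, so $A$ being a simple $C$-algebra is equivalent to $A$ being a simple $R_A$-module. Moreover, $R_A$ acts faithfully on $A$ by construction, since $R_A \subseteq \End_F(A)$. By Schur's lemma, $D := \End_{R_A}(A)$ is a division algebra, and it is finite-dimensional over $F$ because $A$ is; in particular $A$ is naturally a finite-dimensional $D$-vector space.

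Next I would invoke the Jacobson density theorem in its finite-dimensional form: applied to the simple module $A$, it asserts that the tautological inclusion $R_A \hookrightarrow \End_D(A)$ is an equality. Thus $R_A \cong \End_D(A)$ is a matrix algebra over $D^{\op}$, which is a central simple algebra over its center $K := Z(D)$, a finite field extension of $F$. In particular, $R_A$ has no nonzero proper two-sided ideals.

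To finish, I would note that $E_A$ is a two-sided ideal of $R_A$ (as recorded in the paragraph immediately preceding the lemma) and is nonzero by the very definition of simple $C$-algebra. Simplicity of $R_A$ as a ring therefore forces $E_A = R_A$, proving both assertions at once. I do not foresee any genuine obstacle: the entire argument is a standard structure result for a finite-dimensional simple module; the only content is the translation, via Lemma~\ref{submo}, of the operadic simplicity condition into the classical hypothesis of a finite-dimensional faithful simple module over the finite-dimensional algebra $R_A$.
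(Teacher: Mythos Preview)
Your proof is correct and follows the same route as the paper's: the paper simply asserts in one line that a faithful simple $R_A$-module forces $R_A$ to be central simple, then uses that $E_A$ is a nonzero ideal to conclude $E_A=R_A$. You have merely unpacked the first step via Schur's lemma and Jacobson density, which is exactly the standard justification the paper is implicitly invoking.
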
  

\begin{proof} Since $A$ is a faithful simple $R_A$-module, $R_A$ is central simple. 
Since $E_A\ne 0$ and $E_A$ is an ideal in $R_A$, we have $E_A=R_A$. 
\end{proof} 

We say that $A$ is {\it semisimple} if $A$ is a direct sum of a finite (possibly empty) collection 
of simple $C$-algebras: $A=A_1\oplus...\oplus A_m$.  

\begin{lemma} \label{ideals} Let $A=A_1\oplus...\oplus A_m$ be a semisimple $C$-algebra with simple constituents $A_i$. 
Then the only ideals in $A$ are $\oplus_{i\in S}A_i\subset A$, where $S\subset [1,m]$.  
\end{lemma}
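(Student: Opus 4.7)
The plan is to translate the ideal condition into an $R_A$-module condition via Lemma~\ref{submo}(i), decompose the action using Lemma~\ref{dirsum}, and then exploit the fact (from Lemma~\ref{simea}) that each $E_{A_i}$ is a unital central simple algebra whose abstract identity realizes, inside $\End_F(A)$, the projection onto the summand $A_i$.

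First, by Lemma~\ref{submo}(i) it suffices to classify the $R_A$-submodules of $A$. Lemma~\ref{dirsum} gives $E_A = E_{A_1}\oplus\cdots\oplus E_{A_m}$, and the proof of that lemma realises $E_{A_r}$ inside $\End_F(A)$ via operators which act on $A_r$ in the natural way and annihilate the complementary summands: if all but one input slot of $\alpha_n(c)$ is filled by elements of $A_r$ and the remaining slot by an element of $A_s$ with $s\ne r$, the output lies in $A_r\cap A_s=0$, since both $A_r$ and $A_s$ are ideals in $A$ (again by Lemma~\ref{submo}).

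By Lemma~\ref{simea} each $E_{A_i}$ is a central simple algebra, hence a unital algebra in its own right; let $e_i\in E_{A_i}$ be its multiplicative identity. By the preceding paragraph, $e_i$ acts as $\id$ on $A_i$ and as $0$ on the other $A_j$, so $e_i\in E_A\subset R_A$ is precisely the projector onto $A_i$. Any $R_A$-submodule $I\subset A$ is then stable under every $e_i$, which forces $I=\bigoplus_{i=1}^m (I\cap A_i)$. Each $I\cap A_i$ is an $R_A$-stable, and therefore $R_{A_i}$-stable, subspace of $A_i$, so by simplicity of $A_i$ and Lemma~\ref{submo}(i) it is either $0$ or $A_i$. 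Taking $S=\{i:I\cap A_i=A_i\}$ yields the claim.

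The only non-routine point I expect is the identification of the abstract unit of $E_{A_i}$ with the projection onto $A_i$ in $\End_F(A)$; this is where the assumption $E_{A_i}\ne 0$ built into the definition of simplicity is essential, since it guarantees that $e_i$ is a nonzero idempotent in $\End_F(A)$ cutting out $A_i$ as an actual direct summand. Once this is in place, the rest of the argument is an almost mechanical consequence of Lemmas~\ref{dirsum}, \ref{submo}, and~\ref{simea}.
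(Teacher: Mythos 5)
Your proof is correct and follows essentially the same route as the paper: both obtain the projection $P_i$ onto each simple summand as an element of $E_A$ by combining the decomposition $E_A=E_{A_1}\oplus\cdots\oplus E_{A_m}$ of Lemma~\ref{dirsum} with the unitality of $E_{A_i}=R_{A_i}$ from Lemma~\ref{simea}, and then split an arbitrary ideal as $\bigoplus_i (I\cap A_i)$ and invoke simplicity of each $A_i$. The only point you leave implicit is the easy converse, that each $\oplus_{i\in S}A_i$ is in fact an ideal, which the paper likewise dismisses as clear.
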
 

\begin{proof} Clearly, the subspaces in the lemma are ideals. Conversely, 
let $I\subset A$ be an ideal. Let $a=(a_1,...,a_m)\in I$. By Lemma \ref{dirsum} and Lemma \ref{simea}, 
the projection operator $P_i: A\to A$ to $A_i$ along $\oplus_{j\ne i}A_j$ is contained in $E_A$.  
Thus, $P_ia=(0,...,a_i,...,0)\in I$. This implies the statement.   
\end{proof} 

\subsection{The radical}
Let $A'$ be the maximal semisimple quotient of $A$ as an $R_A$-module (it exists by the standard theory of finite dimensional algebras). 
Let $\overline{A}$ be the quotient of $A'$ by the kernel of the action of $E_A$ (which is an $R_A$-submodule of $A$). 
Define the {\it radical} ${\rm Rad}(A)$ of $A$ to be the kernel of the projection of $A$ onto $\overline{A}$. 
So the radical of $A/{\rm Rad}(A)=\overline{A}$ is zero. In particular, if $A$ is a semisimple $C$-algebra, then ${\rm Rad}(A)=0$. 

\begin{theorem}\label{th1} (i) $\overline{A}$ is a semisimple $C$-algebra. In particular, 
${\rm Rad}(A)=0$ if and only if $A$ is semisimple. 

(ii) If $I\subset A$ is an ideal, then $A/I$ is a semisimple $C$-algebra if and only if $I$ contains ${\rm Rad}(A)$. 
\end{theorem}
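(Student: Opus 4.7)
For part (i), I would decompose the semisimple $R_A$-module $A'=\bigoplus_i V_i$ into simple $R_A$-submodules; by Lemma \ref{submo}, this is also a $C$-algebra direct sum. Since $E_A$ is a two-sided ideal in $R_A$, its kernel on each simple $V_i$ is either $V_i$ or $0$; letting $S$ be the set of indices of the former, the kernel of $E_A$ on $A'$ is $\bigoplus_{i\in S}V_i$, and so $\overline{A}\cong\bigoplus_{i\notin S}V_i$. By Lemma \ref{dirsum} applied to $A'$, we have $E_{A'}=\bigoplus_i E_{V_i}$, so $E_A$ acts on $V_i$ via $E_{V_i}$, which is nonzero for $i\notin S$. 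Any $C$-ideal of $V_i$ is an $R_{V_i}$-submodule by Lemma \ref{submo}(i), and hence an $R_A$-submodule (since the $R_A$-action on $V_i$ factors through $R_{V_i}$), so equals $0$ or $V_i$ by simplicity. This makes each such $V_i$ a simple $C$-algebra and shows $\overline{A}$ is semisimple. For the ``in particular'' clause, one direction is immediate; for the converse, if $A=\bigoplus_i A_i$ is semisimple then by Lemma \ref{simea} each $A_i$ is a faithful simple $R_A$-module, so $A=A'$, and $E_A$ acts with trivial kernel on each $A_i$ (by the same faithfulness), so $\overline{A}=A$ and $\Rad(A)=0$.

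For part (ii), the direction $I\supset\Rad(A)\Rightarrow A/I$ semisimple follows from writing $A/I$ as a quotient of $\overline{A}$ and applying Lemma \ref{ideals}. Conversely, suppose $A/I$ is semisimple. Then $A/I$ is a semisimple $R_{A/I}$-module, and since $R_{A/I}$ is a quotient of $R_A$, also a semisimple $R_A$-module; by the universal property of the maximal semisimple quotient, the projection $A\twoheadrightarrow A/I$ factors through $A'$. Decomposing $A/I=\bigoplus_j B_j$ into simple $C$-algebras, the same identifications as in (i) show that $E_A$ acts on each $B_j$ via $E_{B_j}\ne 0$, and since its kernel on the simple $R_A$-module $B_j$ is a proper submodule, it is $0$; hence $E_A$ has zero kernel on $A/I$. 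It follows that the kernel of $E_A$ on $A'$ maps to $0$ in $A/I$, so $A\twoheadrightarrow A/I$ factors through $\overline{A}=A/\Rad(A)$, giving $\Rad(A)\subset I$.

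The recurring delicate point is the identification, for a simple $R_A$-submodule $V$ of a semisimple quotient of $A$, of $E_V$ with the image of $E_A$ acting on $V$, and of $C$-ideals of $V$ with $R_V$-submodules; both rely on Lemmas \ref{dirsum} and \ref{submo}, together with the vanishing of operations mixing elements from distinct simple summands. Once this is in hand, the rest reduces to standard semisimple-module theory over the finite dimensional $F$-algebra $R_A$.
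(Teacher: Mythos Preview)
Your proof is correct and follows the same approach as the paper's: both reduce the statement to the structure of $A$ as a semisimple $R_A$-module via Lemmas \ref{dirsum} and \ref{submo}, and your version simply spells out in detail what the paper leaves implicit (in particular, why each simple $R_A$-summand on which $E_A$ acts nontrivially is a simple $C$-algebra). One small slip: when $A=\bigoplus_i A_i$ is semisimple, $A_i$ is a faithful simple $R_{A_i}$-module (by Lemma \ref{simea}), not a faithful $R_A$-module---the other $E_{A_j}$ kill it---but what you actually use is that $E_A$ acts on $A_i$ through $E_{A_i}\ne 0$, which is correct.
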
 

\begin{proof} (i) By the definition, $\overline{A}$ is a semisimple $R_A$-module, such that $E_A$ acts by nonzero 
on all its simple summands. Hence by Lemma \ref{submo}(ii), $\overline{A}$ is a semisimple $C$-algebra. 

(ii) The ``if'' direction holds by (i) and Lemma \ref{ideals}.
To prove the ``only if'' direction, let $I\subset A$ be an ideal such that 
$A/I$ is a semisimple $C$-algebra: $A/I=A_1\oplus...\oplus A_m$. 
Then by Lemma \ref{submo}(ii) $A/I$ is a semisimple $R_{A/I}$-module and hence $R_A$-module, 
with simple constituents $A_i$, and the action of $E_A$ on $A_i$ is nonzero. 
Thus $I \supset {\rm Rad}(A)$. 
\end{proof} 

\section{$G$-equivariant simple algebras over operads} 

Now let $G$ be a finite group, and $A$ be a $C$-algebra with an action of $G$. 
Let us say that $A$ is a {\it simple $G$-equivariant $C$-algebra} if the only $G$-invariant ideals 
in $A$ are $0$ and $A$, and $E_A\ne 0$. 

\begin{lemma}\label{semi} (i) If $B$ is a simple $C$-algebra then we have ${\rm Aut}(B^{\oplus n})=S_n\ltimes {\rm Aut}(B)^n$. 

(ii) If $A$ is a simple $G$-equivariant $C$-algebra then $A$ is semisimple 
as a usual $C$-algebra. Moreover, $G$ acts transitively on 
the simple constituents of $A$, and in particular they are 
all isomorphic.  
\end{lemma}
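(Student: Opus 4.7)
For part (i), I would use Lemma \ref{ideals} to identify the ideals of $B^{\oplus n}$ with subsets of $[1,n]$; the $n$ minimal nonzero ideals are precisely the summands $B_i$. Since the notion of ideal is intrinsic to the $C$-algebra structure, any automorphism of $B^{\oplus n}$ permutes the $B_i$, giving a group homomorphism $\Aut(B^{\oplus n}) \to S_n$. Its kernel consists of automorphisms preserving each $B_i$, and restricting to each summand identifies this kernel with $\Aut(B)^n$. The obvious embedding $S_n \hookrightarrow \Aut(B^{\oplus n})$ by permutation of summands splits the sequence and yields the semidirect product.

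For part (ii), the plan has three steps: show $A$ is semisimple as an $R_A$-module, upgrade to semisimplicity as a $C$-algebra, and then deduce transitivity. The socle $\soc_{R_A}(A)$ is an $R_A$-submodule of $A$, hence an ideal by Lemma \ref{submo}(i), and it is $G$-invariant because the $R_A$-module structure is $G$-equivariant. It is nonzero since $A$ is finite-dimensional and nonzero, so by $G$-equivariant simplicity $\soc_{R_A}(A) = A$. Thus $A = A_1 \oplus \cdots \oplus A_m$ with each $A_i$ a simple $R_A$-module, and by Lemma \ref{submo}(ii) this is also a decomposition of $C$-algebras.

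To upgrade each $A_i$ to a simple $C$-algebra I need $E_{A_i} \neq 0$. I would consider $J := \bigoplus_{i\,:\,E_{A_i} = 0} A_i$. Because $G$ permutes the $A_i$ and the vanishing of $E_{A_i}$ depends only on the $C$-algebra isomorphism type of $A_i$, the subspace $J$ is $G$-invariant. Since by Lemma \ref{dirsum} $E_A = \bigoplus E_{A_i}$ is nonzero, $J \neq A$, forcing $J = 0$; hence every $E_{A_i}$ is nonzero and each $A_i$ is a simple $C$-algebra. For transitivity, $G$ permutes the $A_i$, and the sum of the $A_i$ over any nonempty $G$-orbit is a $G$-invariant ideal, necessarily equal to $A$ by simplicity; hence there is only one orbit, and all $A_i$ are $G$-translates of each other and in particular $C$-isomorphic.

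The main subtle point is verifying $G$-equivariance of each canonically constructed ideal (the socle, and the ``bad'' summand set $J$); this reduces to the observations that $R_A$ and $E_A \subset \End_F(A)$ are stable under the conjugation action of $G$ on $\End_F(A)$, and that $G$-translates of direct summands in a $C$-algebra decomposition are $C$-isomorphic. The remainder of the argument is a routine application of the preceding lemmas.
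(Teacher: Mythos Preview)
Your argument is correct and runs parallel to the paper's. Part (i) is identical: both you and the paper invoke Lemma~\ref{ideals} to identify the minimal ideals with the $n$ copies of $B$, then factor any automorphism through $S_n$. For part (ii) the only structural difference is that you use the socle of $A$ as $R_A$-module where the paper uses the kernel of the projection to the maximal semisimple $R_A$-quotient; these are dual devices and either one produces a canonical $G$-invariant ideal forcing $R_A$-semisimplicity.

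One small ordering issue: when you argue that $J=\bigoplus_{E_{A_i}=0}A_i$ is $G$-invariant because ``$G$ permutes the $A_i$'', that permutation claim is not yet justified --- simple $R_A$-summands with $E_{A_i}=0$ could a priori occur with multiplicity, so $g(A_i)$ might be a diagonal copy rather than one of your chosen $A_j$. The fix is exactly what you allude to in your final paragraph: observe that $J$ coincides with $\{a\in A: E_A\,a=0\}$, which is manifestly $G$-invariant since $E_A\subset\End_F(A)$ is stable under $G$-conjugation. Once $J=0$, each $E_{A_i}\neq 0$ distinguishes $A_i$ from the other summands as an $R_A$-module, so the $A_i$ really are the unique minimal ideals and $G$ does permute them; at that point your transitivity argument and the paper's (via Lemma~\ref{ideals}) coincide. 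Incidentally, your explicit handling of the condition $E_{A_i}\neq 0$ is more careful than the paper, which passes directly from $R_A$-semisimplicity to ``$A$ is a semisimple $C$-algebra'' without isolating this step.
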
 

\begin{proof} (i) Clearly, $S_n\ltimes {\rm Aut}(B)^n$ acts on $B^{\oplus n}$, so we need to show that any automorphism 
$g$ of $B^{\oplus n}$ belongs to this group. By Lemma \ref{ideals}, the minimal (nonzero) ideals of 
$B^{\oplus n}$ are the $n$ copies of $B$. So they must be permuted by $g$, inducing an element $s\in S_n$. 
Thus $gs^{-1}$ is an automorphism preserving all the copies of $B$. So $gs^{-1}\in {\rm Aut}(B)^n$, as desired. 

(ii) Let $I$ be kernel of the projection from $A$ to its maximal semisimple quotient $A'$ as an 
$R_A$-module. Then by Lemma \ref{submo}(i), $I$ is a $G$-invariant ideal in $A$, and $I\ne A$. 
Hence $I=0$, and $A$ is a semisimple $R_A$-module. So by Lemma \ref{submo}(ii), 
$A=A_1\oplus...\oplus A_m$ is a semisimple $C$-algebra. Thus 
by Lemma \ref{ideals}, the minimal ideals of 
$A$ are the $A_i$. So they are permuted by $G$. Moreover, the action of $G$ on these 
ideals must be transitive, as every orbit gives a nonzero $G$-invariant ideal. 
\end{proof} 

Now let $B$ be a simple $C$-algebra, $H$ a subgroup of $G$, and $\phi: H\to {\rm Aut}(B)$ a homomorphism. 
Let $A={\rm Fun}_H(G,B)$ be the space of $H$-invariant functions on $G$ with values in $B$. 
Then it is clear that $A$ has a natural structure of a simple $G$-equivariant $C$-algebra, isomorphic to $B^{\oplus |G/H|}$ as a 
usual $C$-algebra. Note that the stabilizer of any minimal ideal of $A$ is a subgroup of $G$ conjugate to $H$.  

\begin{theorem}\label{semi1} Any simple $G$-equivariant $C$-algebra $A$ is of the form $A={\rm Fun}_H(G,B)$. Moreover, the subgroup $H$ is defined by $A$ 
uniquely up to conjugation in $G$, and $\phi$ is defined uniquely up to conjugation in ${\rm Aut}(B)$.  
\end{theorem}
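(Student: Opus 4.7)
The plan is to replicate the classical induction argument in the operadic setting. First, I apply Lemma \ref{semi}(ii) to decompose $A=A_1\oplus\cdots\oplus A_m$ as a semisimple $C$-algebra whose simple summands $G$ permutes transitively. Set $H:={\rm Stab}_G(A_1)$, so that $|G/H|=m$, and fix a $C$-algebra isomorphism $\psi_1\colon A_1\xrightarrow{\sim}B$, where $B$ denotes the common simple constituent. Transporting the restricted $H$-action on $A_1$ via $\psi_1$ yields a homomorphism $\phi\colon H\to{\rm Aut}(B)$, and I form the target ${\rm Fun}_H(G,B)$ under the convention $f(hg)=\phi(h)f(g)$ with left $G$-action by right translation $(g_0\cdot f)(g):=f(gg_0)$.

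Next, I construct the comparison map $\Psi\colon A\to{\rm Fun}_H(G,B)$ by $\Psi(a)(g):=\psi_1\bigl(p_1(g\cdot a)\bigr)$, where $p_1\colon A\to A_1$ is the projection along $\bigoplus_{i\ne 1}A_i$. The verification that $\Psi(a)$ actually lies in ${\rm Fun}_H(G,B)$ uses only the observation that for $h\in H$ the operator $h$ fixes $A_1$ and permutes the remaining summands among themselves, so $p_1$ commutes with the action of $h$. Then $G$-equivariance is immediate from the right-translation convention, and injectivity follows because, choosing for each $i$ an element $g_i\in G$ with $g_iA_i=A_1$, one has $\Psi(a)(g_i)=\psi_1(g_i\cdot a_i)$, which forces the $A_i$-component of $a$ to vanish whenever $\Psi(a)=0$. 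The dimension count $|G/H|\cdot\dim B=\dim A$ then yields bijectivity.

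To upgrade $\Psi$ to an isomorphism of $C$-algebras, I combine Lemma \ref{submo}(ii), whose proof implies that operad operations on $A$ act componentwise in the decomposition into the $A_i$ (so that $p_1$ is itself a $C$-algebra homomorphism), with the fact that $G$ acts on $A$ by $C$-algebra automorphisms. Together these show that each composite $\psi_1\circ p_1\circ g\colon A\to B$ is a $C$-algebra homomorphism, which is precisely what is needed to match $\Psi$ with the pointwise operations on ${\rm Fun}_H(G,B)$.

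For uniqueness, Lemma \ref{ideals} characterizes the minimal nonzero ideals of $A$ intrinsically, and Lemma \ref{semi}(ii) says $G$ permutes them transitively, so their stabilizers form a single $G$-conjugacy class, necessarily that of $H$. Any two identifications of $A_1$ with $B$ differ by an element of ${\rm Aut}(B)$, which conjugates the associated $\phi$. The only real obstacle in the argument is keeping the conventions straight: one must fix compatible choices for the $G$-action on ${\rm Fun}_H(G,B)$, for the $H$-invariance condition, and for the projection $p_1$, so that the single formula $\Psi(a)(g)=\psi_1(p_1(ga))$ simultaneously lands in the correct space, intertwines the two $G$-actions, and respects the $C$-algebra operations.
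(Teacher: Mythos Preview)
Your argument is correct and follows essentially the same route as the paper: both use Lemma~\ref{semi}(ii) to obtain the transitive $G$-action on the simple summands, take $H$ to be the stabilizer of one of them, and build the isomorphism $A\to{\rm Fun}_H(G,B)$ from the $H$-stable projection $A\to B$ via Frobenius reciprocity --- your formula $\Psi(a)(g)=\psi_1(p_1(g\cdot a))$ is exactly this map written out explicitly, and your uniqueness argument matches the paper's. The only difference is presentational: the paper packages the construction by invoking the classification of homomorphisms $G\to S_n\ltimes{\rm Aut}(B)^n$ (from Lemma~\ref{semi}(i)) and then says the verification is easy, whereas you unwind the map and check everything directly.
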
  

\begin{proof}
By Lemma \ref{semi}(ii), $G$ acts transitively on the set of minimal  ideals in $A$, and they are 
all isomorphic to some simple $C$-algebra $B$. Thus, the result follows from Lemma \ref{semi}(i) and the standard classification of homomorphisms 
$G\to S_n\ltimes {\rm Aut}(B)^n$. Namely, let $H$ be the stabilizer of one of the copies of $B$.  
Then $H$ acts on $B$ through some homomorphism $\phi: H\to {\rm Aut}(B)$. Moreover, we have a canonical $G$-equivariant linear map 
$\psi: A\to {\rm Fun}_H(G,B)$ corresponding via Frobenius reciprocity to the $H$-stable projection $A\to B$ to the chosen copy of $B$ along the direct sum of all the other copies. It is easy to check using Lemma \ref{semi} that 
$\psi$ is an isomorphism of $G$-equivariant $C$-algebras. The rest is easy.  
\end{proof} 

\begin{remark}
1. Note that in the examples of commutative, associative, 
and Lie algebras we obtain the classical theorems 
about classification of simple $G$-equivariant algebras of these types. 

2. Lemma \ref{semi} and Theorem \ref{semi1} don't hold without the assumption $E_A\ne 0$. E.g., one may take $A$ to be any irreducible representation of $G$ 
equipped with the zero Lie bracket.    

3. The results of this section extend verbatim to the case when $G$ is any group (not necessarily finite), 
or is an affine algebraic group over $F$. Namely, as in the finite group case, the classification of simple $G$-equivariant algebras reduces to classification of homomorphisms $G\to S_n\ltimes {\rm Aut}(B)^n$, which are paramertized by finite index subgroups $H$ of $G$ and homomorphisms $\phi: H\to {\rm Aut}(B)$ up to conjugation. 
\end{remark} 

\begin{remark} While the question of classification of 
$G$-equivariant simple algebras over operads 
is natural in its own right, the motivation for writing this note was to provide a more general context for the results of \cite{S}. 
Namely, Lemma \ref{semi} and Theorem \ref{semi1} allow one to extend the main result of \cite{S} 
(Theorem 5.5 on the classification of simple commutative algebras in the Deligne category ${\rm Rep}(S_t)$) to
algebras over a finitely generated linear operad $C$ over $\Bbb C$. Informally speaking, this generalization says that for transcendental $t$ any 
such algebra is obtained by induction from ${\rm Rep}(G)\boxtimes {\rm Rep}(S_{t-k})$ of an interpolation $B$ of 
a family of $G\times S_{n-k}$-equivariant simple algebras $B_n$, defined for some strictly increasing sequence of positive integers $n$ and depending algebraically on $n$. 

This gives a classification of simple $C$-algebras in ${\rm Rep}(S_t)$ whenever a classification of ordinary simple $C$-algebras (and their automorphisms) is available.   
For instance, in the case of associative unital algebras, $B={\rm End}(V)$, where $V$ is an object of ${\rm Rep}(S_t)$, and 
in the case of Lie algebras $B=\mathfrak{sl}(V), \mathfrak{o}(V)$, or $\mathfrak{sp}(V)$, where in the second case $V$ is equipped 
with a nondegenerate symmetric form and in the third case with a nondegenerate skew-symmetric form. 

The proof of this generalization is similar to the proof of Theorem 5.5 of \cite{S}, which 
covers the case of commutative unital algebras (in which case $B=\Bbb C$), but is somewhat more complicated 
since in general ${\rm Aut}(B)\ne 1$. The finite generation assumption for $C$ 
is needed to validate the constructibility arguments of \cite{S}, Section 4. 
This will be discussed in more detail elsewhere. 
\end{remark} 

{\bf Acknowledgements.} The author is grateful to Nate Harman for useful discussions. 
 The work of the author was partially supported by the NSF grant DMS-1502244.

\end{document}